\documentclass[12pt]{amsart}
\usepackage[utf8]{inputenc}
\usepackage{amsmath}
\usepackage{amsfonts}
\usepackage{amssymb}
\usepackage{amsthm}
\usepackage{bbm}
\usepackage{hyperref}
\renewcommand{\sqrt}[1]{\left( #1 \right)^\frac12}

\newcommand{\E}[0]{\mathbb{E}}

\newcommand{\N}[0]{\mathbb{N}}

\newcommand{\R}[0]{\mathbb{R}}
\newcommand{\Z}[0]{\mathbb{Z}}

\renewcommand{\d}[0]{\mathrm{d}}
\newcommand{\sgn}[0]{\mathrm{sgn}}

\newcommand{\p}[1]{\left(#1\right)}

\newcommand{\beq}{\begin{equation}}
\newcommand{\eeq}{\end{equation}}
\newcommand{\supp}[0]{\mathrm{supp}\,}

\addtolength{\textwidth}{100pt}
\addtolength{\hoffset}{-50pt}
\author{Maciej Rzeszut}
\email{maciej.rzeszut@gmail.com}

\title{Some remarks on Davis inequality for biparameter filtrations}
\newtheorem{thm}{Theorem}
\newtheorem{lem}[thm]{Lemma}
\newtheorem{deff}[thm]{Definition}
\newtheorem{cor}[thm]{Corollary}

\numberwithin{thm}{section}
\numberwithin{equation}{section}

\allowdisplaybreaks
\begin{document}
\begin{abstract}The Davis inequality $\E Sf\simeq \E f^*$ between $L^1$ norms of square function of a martingale and its maximal function is known for martingales indexed by linearly ordered filtrations and in some particular cases for double indexed one. We prove the $\gtrsim $ inequality for arbitrary filtrations satisfying the (F4) condition of Cairoli and Walsh and propose a method to attack the other inequality. The former is done by means of a two-parameter analogue of Davis-Garsia decomposition.\end{abstract}
\maketitle
\section{Introduction}
Let us briefly introduce the notation and some known results concerning martingale $H^1$ spaces in one and two parameters. The reader may find a more detailed exposition in \cite{niestety} and references within.\par
Let $\p{\mathcal{F}_i}_i$ be a filtration indexed by $i\in \N=\N\cup\{0\} $ or $i\in \N^2$. The set $\N^2$ will be equipped with partial order $(i,j)\leq \p{i',j'}\iff i\leq i',j\leq j'$ and $(i,j)-1$ will stand for $(i-1,j-1)$. For brevity we will write $\E_i= \E\p{\cdot\mid \mathcal{F}_i}$. The martingale difference operators are defined by 
\beq\Delta_i=\E_i-\E_{i-1}\text{ for }i\in \N_+,\quad \Delta_0=\E_0;\eeq
\beq \Delta_{i,j}=\E_{i,j}+\E_{i-1,j-1}-\E_{i-1,j}-\E_{i,j-1},\eeq
\beq \Delta_{i,0}= \E_{i,0}-\E_{i-1,0},\quad \Delta_{0,j}=\E_{0,j}-\E_{0,j-1}\text{ for }i,j\in\N^2_+,\eeq
\beq \Delta_{0,0}=\E_{0,0}.\eeq
The definition of $\Delta_{i,j}$ with indices containing 0 is made to ensure
\beq f=\sum_{i,j\geq 0}\Delta_{i,j}f.\eeq
For $i,j\in \Z$, we put 
\beq \E_i=\E_{i\vee 0}, \quad \E_{i,j}=\E_{i\vee 0,j\vee 0}.\eeq
The square function, conditional square function and maximal function [with respect to $\p{\mathcal{F}_i}$] are defined by
\beq S_p f=\left(\sum_{i\geq 0} \left|\Delta_i f\right|^p\right)^\frac1p,\quad s_p f= \left(\sum_{i\geq 0} \E_{i-1}\left|\Delta_i f\right|^p\right)^\frac1p, \quad Mf= \sup_{i\geq 0} \left| \E_i f\right|\eeq
and the corresponding Hardy space norms by 
\beq \|f\|_{H_1^S}= \|f\|_{S_2 L_1}=\E S_2 f,\quad \|f\|_{H^s_1}= \|f\|_{s_2 L_1}= \E sf,\quad \|f\|_{H_1^*}= \|f\|_{ML_1}=\E Mf.\eeq\par
A double-indexed filtration is said to satisfy the (F4) condition if $\bigvee_{k}\mathcal{F}_{i,k}$ and $\bigvee_k \mathcal{F}_{k,j}$ are conditionally independent w.r.t. $\mathcal{F}_{i,j}$. A filtration is called regular if, for any positive $f$, $\E_{i+1}f\lesssim \E_i f$ (one-parameter case) and $\E_{i+1,j}f,\E_{i,j+1}f\lesssim \E_{i,j} f$ (two-parameter case). \par
Let $\p{\Omega,\mathcal{F},\mu}$ be a probability space. A canonical one-parameter filtration of product type is a filtration $\p{\mathcal{F}_i}_{i\in \N}$ on $\Omega^\N$ such that $\mathcal{F}_i$ is generated by the coordinate projection $x\mapsto \p{x_k}_{k\leq i}$. A canonical two-parameter filtration of product type is a filtration $\p{\mathcal{F}_{i,j}}_{i,j\in\N}$ on $\Omega^\N\times \Omega^\N$ such that $\mathcal{F}_{i,j}=\mathcal{F}_i\otimes \mathcal{F}_j$ and $\p{\mathcal{F}_i}_{i\in\N}$ is a canonical filtration of product type on $\Omega^\N$. These filtrations are regular iff $\Omega$ is finite, with a constant depending on the smallest mass of an atom of $\Omega$. The two-parameter one always satisfies the (F4) condition. However, this model is not universal and is characterized by the property 
\beq \mathcal{F}_{i-1,j}\vee\mathcal{F}_{i,j-1}=\mathcal{F}_{i,j}.\eeq
In \cite{RzT}, we prove that when we restrict our attention to the case of discrete probability spaces, a universal model is 
\beq\label{eq:univdefi} \bigotimes_{i,j}\p{\Omega_{i,j},\mathcal{S}_{i,j},\mu_{i,j}},\eeq
where $\mathcal{F}_{i,j}$ is defined by 
\beq \label{eq:univdefii}\mathcal{F}_{i,j}= \sigma\p{x\mapsto \p{x_{k,l}}_{k\leq i, l\leq j}}.\eeq \par

For any two Banach spaces $X$, $Y$ such that adding their elements is well defined, by $X+Y$ we mean their algebraic sum with the norm
\beq\|f\|_{X+Y}=\inf_{\substack{f=g+h\\ g\in X,h\in Y}}\|g\|_{X}+\|h\|_{Y}.\eeq
In one-parameter case, a theorem of Davis \cite{davis} states that 
\beq H_1^S=H_1^*.\eeq
Moreover, it is known \cite{garsia} that 
\beq H_1^S = H_1^s + S_1 L_1\eeq
(to which we refer as the Davis-Garsia decomposition). In particular, $\|f\|_{H_1^s}\geq \|f\|_{H_1^S}$. Also, if the filtration is regular, all three norms are equivalent: $\|f\|_{H_1^S}\simeq \|f\|_{H_1^s}\simeq \|f\|_{H_1^*}$. \par
In two-parameter case, it is only known that
\beq \|f\|_{H_1^s}\gtrsim \|f\|_{H_1^*},\eeq
\beq \|f\|_{H_1^s}\gtrsim \|f\|_{H_1^S},\eeq
provided the filtration satisfies (F4). Both inequalities are due to Weisz \cite{niestety}. If additionally the filtration is regular, $H_1^S=H_1^*=H_1^s$. The question whether $H_1^S=H_1^*$ is open without the regularity assumption. We are going to prove one inclusion in the canonical product type case, by decomposing $H_1^S$ into 4 summands, each of which is contained in $H_1^*$, then indicate how to perform an analogous proof in the universal case. \par

\section{Decoupling inequalities and 4 summand decomposition}
We will use the following decoupling inequality \cite{zinn}.
\begin{thm}[Zinn]Let $f_k$ be a sequence of fucntions adapted to the canonical product filtration on $\Omega^\N$. Then, treating each $f_k$ as a function on $\Omega^k$,
\beq \int_{\Omega^\N}\sqrt{\sum_k f_k\p{x_1,\ldots,x_k}^2}\d x\simeq \int_{\Omega^\N}\int_{\Omega^\N}\sqrt{\sum_k f_k\p{x_1,\ldots,x_{k-1},\eta_k}^2}\d x\d \eta.\eeq\end{thm}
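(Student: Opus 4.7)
Both directions of the equivalence are proved by the same strategy, so I focus on the inequality $\lesssim$. The plan is to linearise the square function via an auxiliary Rademacher randomisation, then recognise the decoupled sum as a tangent sequence of the original and invoke a standard tangent-sequence comparison.

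First, introducing an i.i.d.\ Rademacher sequence $\p{\varepsilon_k}$ on an auxiliary probability space and applying Khintchine's inequality pointwise in $x$, I rewrite
\[
\int_{\Omega^\N} \sqrt{\sum_k f_k\p{x_1,\ldots,x_k}^2}\,\d x \;\simeq\; \int_{\Omega^\N} \E_\varepsilon \Bigl|\sum_k \varepsilon_k f_k\p{x_1,\ldots,x_k}\Bigr|\,\d x,
\]
and analogously for the right-hand side with $x_k$ replaced by $\eta_k$ in the last coordinate slot. On the enlarged space $\Omega^\N\times\Omega^\N$ the randomised sum $M := \sum_k \varepsilon_k f_k\p{x_1,\ldots,x_k}$ is a martingale in the filtration generated by $\p{x_j,\varepsilon_j}_{j\leq n}$, and the decoupled sum $\widetilde M := \sum_k \varepsilon_k f_k\p{x_1,\ldots,x_{k-1},\eta_k}$ is its tangent sequence: conditionally on $\p{x_j}_{j<k}$ and $\p{\varepsilon_j}_j$, the variables $\varepsilon_k f_k\p{\cdot,x_k}$ and $\varepsilon_k f_k\p{\cdot,\eta_k}$ have the same distribution, since $x_k$ and $\eta_k$ are i.i.d. Moreover, the decoupled summands are conditionally independent given the full $x$-sequence, because the $\eta_k$'s are mutually independent; so the CI (conditional independence) property holds.

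The second step is to invoke the Hitczenko / Kwapie\'n--Woyczy\'nski tangent-sequence comparison: for tangent sequences with the CI property, the $L^1$-norms of partial sums are comparable, giving $\E|M|\simeq \E|\widetilde M|$. Undoing Khintchine's inequality on both sides closes the argument.

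The main obstacle is the tangent-sequence comparison itself, which is the substance of any modern proof of Zinn's theorem. A self-contained proof would proceed by replacing $x_k$ with $\eta_k$ one index at a time, each swap being handled by conditioning on all remaining variables so that only a symmetric two-point exchange remains. The product structure of the canonical filtration is essential here: it is precisely what ensures that $x_k$ and $\eta_k$ remain i.i.d.\ after any such conditioning, and what lets the randomisation by $\varepsilon_k$ absorb the sign fluctuations coming from the exchange.
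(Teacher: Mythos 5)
The paper does not reproduce a proof of this theorem; it is quoted verbatim from Zinn's 1985 paper \cite{zinn}, so there is no internal argument to compare against. I will therefore evaluate your sketch on its own terms.

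Your overall route — linearise the square function by Rademacher randomisation and Khintchine, recognise $f_k(x_{<k},\eta_k)$ as the canonical decoupled tangent sequence of $f_k(x_{\leq k})$, and invoke a tangent-sequence comparison — is a legitimate and fairly standard modern path to Zinn's result. The tangency and CI verifications in your second paragraph are set up correctly (take $\mathcal{G}_k=\sigma(x_{\leq k},\eta_{\leq k},\varepsilon_{\leq k})$ and $\mathcal{G}=\sigma(x)$).

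There is, however, a genuine gap in the step you call the ``second step.'' The statement ``for tangent sequences with the CI property, the $L^1$-norms of partial sums are comparable'' is false as stated. A two-step example with $\Omega=\{\pm 1\}$, $f_1=x_1$, $f_2=-x_1$ gives $\sum f_k\equiv 0$ while the decoupled sum $\eta_1-x_1$ has $L^1$-norm $1$; these are tangent with CI. What the Hitczenko / Kwapie\'n--Woyczy\'nski theorems give for CI tangent pairs is comparison of $\E\Phi\bigl(\sup_n\bigl|\sum_{k\leq n}d_k\bigr|\bigr)$, i.e.\ of the \emph{maximal} functions, not of the terminal sums. To pass from the maximal function back to the terminal sum you must use the conditional symmetry introduced by the Rademacher signs, via a conditional L\'evy inequality: a conditionally symmetric martingale satisfies $\E\sup_n|M_n|\simeq\E|M_\infty|$. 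So the $\varepsilon_k$'s are not merely a device for Khintchine linearisation — they are the very thing that makes the $L^1$-endpoint comparison of sums valid, and this needs to be said and used explicitly. Your closing remark that the randomisation ``absorbs the sign fluctuations'' gestures at this but does not supply the L\'evy-type step that the argument actually requires.

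A cleaner variant worth noting: one can avoid Khintchine entirely by regarding $(0,\dots,0,f_k,0,\dots)$ as an $\ell_2$-valued adapted sequence, so that the $\ell_2$-norm of the running sum is automatically increasing and hence $\sup_n\|\sum_{k\leq n}\cdot\|_{\ell_2}=\sqrt{\sum_k f_k^2}$ with no maximal/terminal discrepancy. The Banach-valued CI tangent-sequence comparison (Hitczenko's domination theorem in Banach spaces, together with its converse direction) then gives the statement directly, with the ``sup equals final value'' observation doing the work that L\'evy's inequality does in your version.
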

\begin{cor}\label{makeitdouble}Let $f_{i,j}$ be a sequence of functions adapted to the canonical product filtration on $\Omega^\N\times \Omega^\N$. Then, treating $f_{i,j}$ as a function on $\Omega^i\times \Omega^j$,
\beq \int_{\Omega^\N\times \Omega^\N} \sqrt{\sum_{i,j} f_{i,j}\p{x_{\leq i},y_{\leq j}}^2}\d x\d y\simeq\eeq
\beq \int_{\Omega^\N\times \Omega^\N} \int_{\Omega^\N}\int_{\Omega^\N}\sqrt{\sum_{i,j} f_{i,j}\p{\p{x_{<i},\eta_i},\p{y_{< j},\theta_j}}^2}\d \eta\d\theta\d x\d y.\eeq\end{cor}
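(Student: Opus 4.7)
The plan is to iterate Zinn's theorem, first in the $x$ variable and then in the $y$ variable, with Fubini applied in between. The key observation is that for each fixed $y$, the function
$$g_i(x_{\leq i}) := \p{\sum_j f_{i,j}\p{x_{\leq i},y_{\leq j}}^2}^{1/2}$$
is adapted to the one-parameter canonical filtration on $\Omega^\N$ in the $x$ variable, since each $f_{i,j}\p{x_{\leq i},y_{\leq j}}$ depends on $x$ only through $x_{\leq i}$, so the same is true of the sum over $j$. Hence Zinn's theorem applies to the sequence $\p{g_i}_i$, and for every fixed $y$ yields
$$\int_{\Omega^\N}\p{\sum_{i,j} f_{i,j}\p{x_{\leq i},y_{\leq j}}^2}^{1/2}\d x\simeq \int_{\Omega^\N}\int_{\Omega^\N}\p{\sum_{i,j} f_{i,j}\p{\p{x_{<i},\eta_i},y_{\leq j}}^2}^{1/2}\d x\d \eta.$$

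Integrating this in $y$ and using Fubini on the right-hand side converts the original integral into the same expression with the inner decoupling already performed in the $x$ direction. Now, for each fixed pair $\p{x,\eta}$, the function
$$h_j(y_{\leq j}) := \p{\sum_i f_{i,j}\p{\p{x_{<i},\eta_i},y_{\leq j}}^2}^{1/2}$$
is adapted to the canonical filtration in $y$ by exactly the same argument: substituting $\p{x_{<i},\eta_i}$ into the first coordinate does not alter the fact that the dependence on $y$ is only through $y_{\leq j}$. A second application of Zinn's theorem to the sequence $\p{h_j}_j$, again followed by Fubini, introduces the auxiliary sequence $\theta$ and gives precisely the right-hand side of the corollary.

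The main point to verify is the preservation of adaptedness at each stage, and this is immediate from the definition of the canonical product filtration; there is no real obstacle. One only has to remember that Zinn's equivalence has implicit constants independent of the functions involved, so that composing the two applications yields a two-sided equivalence with absolute constants.
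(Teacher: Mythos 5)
Your proof is correct and follows the expected approach of iterating Zinn's one-parameter theorem, once in $x$ and once in $y$, with Fubini in between; the paper itself states the corollary without proof, and this is clearly the intended route. The reduction to the scalar Zinn inequality via $g_i = \bigl(\sum_j f_{i,j}^2\bigr)^{1/2}$ and $h_j = \bigl(\sum_i f_{i,j}^2\bigr)^{1/2}$ is a valid and clean way to sidestep any need for a vector-valued version of the theorem, and your remarks on adaptedness and on the constants being independent of the frozen variable are exactly the points that need checking.
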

We also recall our previous result \cite{diss}. 
\begin{thm}\label{4summand} Let $f_{i,j}\in L^1\p{\Omega^2}$. There exist $\{0,1\}$-valued functions $a_{i,j},b_{i,j},c_{i,j},d_{i,j}$ on $\Omega^2$ such that
\beq a_{i,j}+b_{i,j}+c_{i,j}+d_{i,j}=1\eeq
and
\begin{align}
\lefteqn{\int_{\Omega^\N}\int_{\Omega^\N}\sqrt{\sum_{i,j} f_{i,j}\p{x_i,y_j}^2}\d x\d y}\\
\gtrsim & \sum_{i,j}\int_\Omega\int_\Omega \left|a_{i,j}f_{i,j}\right|\p{x_i,y_j}\d x_i \d y_j\\
+& \sqrt{ \sum_{i,j}\int_\Omega\int_\Omega \left(b_{i,j}f_{i,j}\right)^2\p{x_i,y_j}\d x_i \d y_j}\\
+& \sum_i \int_\Omega \sqrt{\sum_j \int_\Omega \p{c_{i,j}f_{i,j}}\p{x_i,y_j}^2 \d y_j}\d x_i\\
+& \sum_j \int_\Omega \sqrt{\sum_i \int_\Omega \p{d_{i,j}f_{i,j}}\p{x_i,y_j}^2 \d x_i}\d y_j.
\end{align}
\end{thm}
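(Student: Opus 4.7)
The plan is to construct the four indicators explicitly via a pointwise comparison of $|f_{i,j}\p{x_i,y_j}|$ with the two natural ``fiber'' $L^2$ norms
\beq g_{i,j}\p{x_i} := \sqrt{\int_\Omega f_{i,j}\p{x_i,y}^2\,\d y}, \qquad h_{i,j}\p{y_j} := \sqrt{\int_\Omega f_{i,j}\p{x,y_j}^2\,\d x}. \eeq
These are the correct $L^2$--benchmarks against which to declare $|f_{i,j}|$ pointwise ``large'' or ``small'' in either direction. Concretely, I would set $a_{i,j}=1$ where $|f_{i,j}|\ge \max\p{g_{i,j},h_{i,j}}$ (value dominates both scales), $c_{i,j}=1$ where $|f_{i,j}|<g_{i,j}$ and $g_{i,j}\ge h_{i,j}$, $d_{i,j}=1$ where $|f_{i,j}|<h_{i,j}$ and $h_{i,j}>g_{i,j}$, and $b_{i,j}=1$ on the remaining set, so that exactly one indicator equals $1$ at each $\p{x_i,y_j}$.

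Once the partition is fixed, the identity $f_{i,j}^2=\sum_{t\in\{a,b,c,d\}}\p{t_{i,j}f_{i,j}}^2$ (which uses $\{0,1\}$-valuedness and pointwise disjointness) together with the elementary inequality $\sqrt{u+v+w+z}\ge \tfrac12\p{\sqrt u+\sqrt v+\sqrt w+\sqrt z}$ reduces the theorem to four separate sub-estimates: for each $t\in\{a,b,c,d\}$,
\beq \int_{\Omega^\N}\int_{\Omega^\N}\sqrt{\sum_{i,j}\p{t_{i,j}f_{i,j}}^2\p{x_i,y_j}}\,\d x\,\d y\gtrsim \text{the }t\text{-th summand}. \eeq
The four sub-estimates are of genuinely different flavours. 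Summand $(b)$ is of $\ell_2(L^2)$ type; on $\supp b_{i,j}$ the value of $|f_{i,j}|$ is controlled by both fiber scales, so a Paley--Zygmund / second-moment argument applied to the nonnegative independent sum $\sum_{i,j}\p{b_{i,j}f_{i,j}}^2\p{x_i,y_j}$ extracts the required $L^1$ lower bound from its mean. Summands $(c)$ and $(d)$ are the ``mixed'' ones and should follow from a one-parameter decoupled Davis-type inequality applied in one direction at a time: after integrating out $y_k$, $k\ne j$, by Jensen and independence, the defining condition on $c_{i,j}$ provides the $y$-concentration that converts $\ell_2$ in $j$ into the desired $L^1(x_i)$ form, and summing over $i$ finishes the bound.

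The main obstacle I anticipate is summand $(a)$. The defining condition $|f_{i,j}\p{x_i,y_j}|^2\ge \int f_{i,j}\p{x_i,y}^2\,\d y$ on $\supp a_{i,j}$ only forces a ``fiber'' sparsity in each variable separately, not genuine disjointness of the supports of the different $a_{i,j}f_{i,j}$ when read on the full product space $\Omega^\N\times\Omega^\N$. Converting the former into the latter --- so that $\sqrt{\sum\p{a_{i,j}f_{i,j}}^2}$ can be replaced pointwise by $\max_{i,j}|a_{i,j}f_{i,j}|$ up to absolute constants and the integral of this maximum then bounded below by $\sum_{i,j}\int\int|a_{i,j}f_{i,j}|\,\d x_i\,\d y_j$ via independence --- will likely require refining the raw thresholds by a stopping-time style iteration, or by truncating at a strictly higher level (say $|f_{i,j}|\ge \kappa\cdot\max\p{g_{i,j},h_{i,j}}$ for a sufficiently large absolute constant $\kappa$) to create enough margin for the essential-disjointness argument. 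Verifying the consistency of all four thresholds across the four regions and the compatibility of the induced constants is, I expect, the most delicate bookkeeping of the proof.
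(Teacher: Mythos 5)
Your proposed partition is not correct, and the failure is not where you anticipate it. Consider the simplest nontrivial example: $\Omega=\{\pm1\}$ with uniform measure and $f_{i,j}(x,y)=xy$ for $(i,j)\in[1,N]\times[1,M]$. Then $g_{i,j}(x_i)=h_{i,j}(y_j)=1$ and $|f_{i,j}(x_i,y_j)|=1$ pointwise, so your rule puts $a_{i,j}\equiv 1$. The right-hand side is then at least $\sum_{i,j}\E|f_{i,j}|=NM$, while the left-hand side is $\E\bigl(\sum_{i,j}1\bigr)^{1/2}=\sqrt{NM}$. The inequality therefore fails with any absolute constant as soon as $NM$ is large. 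The source of the problem is that comparing $|f_{i,j}|$ to the fibre $L^2$ norms on equal footing cannot detect the ``Gaussian/Rademacher'' regime: anything saturating the $L^2$ scale gets misclassified as $a$. In fact a Boolean check of your four conditions shows your class $b$ is \emph{identically empty} (in the case $g\ge h$, ``not $c$'' forces $|f|\ge g$ which contradicts ``not $a$''; symmetrically for $h>g$), which is already a red flag, since the pure $\ell_2(L^2)$ summand is the one that must absorb iid small variables. Moreover, for the $a$-term, a Chebyshev bound on your cutoff gives only fibre measure $\le 1$, i.e.\ no sparsity at all, so the ``maximum $\approx$ sum'' step you hope for has no chance of getting off the ground; you gesture at a ``stopping-time style iteration'' or raising the threshold by a large constant $\kappa$, but that is precisely the missing argument, and raising the threshold moves mass into the (currently empty) $b$ class without supplying a proof there either.

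For context, the paper itself does not prove Theorem \ref{4summand}; it recalls it from the dissertation \cite{diss} and in Section 3 states that Lemma \ref{westvirginia} is ``an improved version of the main lemma used in the proof of Theorem \ref{4summand}.'' That lemma operates by a duality/Lagrange-multiplier mechanism: one dualizes the $L^1(\ell_2)$ norm, finds the minimizer of a convex functional on an affine slice, characterizes the optimal weight $w$ via the Euler--Lagrange condition, and only then introduces an indicator of the form $\mathbbm{1}_{\{\E_i w_{i,j}\ge\kappa\}}$. The weight is thus an output of an optimization, not an a priori pointwise comparison to fibre $L^2$ norms. Your plan inverts that logic and, as the example shows, the resulting partition cannot work. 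A successful elementary/pointwise route would at minimum need distribution-function (quantile) thresholds in the spirit of Johnson--Schechtman rather than variance-scale thresholds, and would still have to confront the genuine difficulty that fibrewise sparsity in each variable separately does not give joint sparsity on the product.
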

Here, equpping $x_i$'s and $y_i$' with changing indices on the right hand side is purely ornamental and we could write $\int_{\Omega}\d\xi$, $\int_{\Omega}\d\upsilon$ instead od $\int_{\Omega}\d x_i$, $\int_{\Omega}\d y_j$.

\section{Main lemma}
We are going to show an improved version of the main lemma used in the proof of Theorem \ref{4summand}. 
\begin{deff}A (not necessarily ordered) family of sigma-algebras $\mathcal{F}_i$ is said to have $p$-Doob property if the inequality
\beq \delta\left\| \sup_i \left|\E_i f\right|\right\|_{L_p}\leq\|f\|_{L_p}\eeq
is satisfied for some constant $\delta>0$ and all $f$.\end{deff}
It is known that (F4) filtrations have $p$-Doob property for any $p\in (1,\infty)$. For a canonical product filtration, this can be checked by iterating one-parameter Doob inequality.
\begin{lem}\label{westvirginia}Let $\p{\Omega,\mathcal{F},\mu}$ be a finite probability space, $\p{\mathcal{F}_i}_{i\in I}$ be a filtration on $\Omega$ indexed by a finite set $I$, which has $2$-Doob property with constant $\delta$. Let also $J$ be a finite set and $w_{i,j}$ for $(i,j)\in I\times J$ be $[0,1]$-valued functions on $\Omega$. Then, for any $\kappa>0$ and any sequence $\p{f_{i,j}}_{(i,j)\in I\times J}$ of functions such that $f_{i,j}$ is $\mathcal{F}_i$-measurabe,
\beq \label{eq:lemmainineq}\E\sqrt{\sum_{(i,j)\in I\times J} \p{w_{i,j}f_{i,j}}^2}\geq \kappa^2 \delta \E\sqrt{\sum_{(i,j)\in I\times J} \mathbbm{1}_{\left\{\E_i w_{i,j} \geq \kappa\right\}}f_{i,j}^2}.\eeq
\end{lem}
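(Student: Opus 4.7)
The plan is to split the proof into a pointwise reduction using the $\mathcal{F}_i$-measurability of $f_{i,j}$ (yielding the factor $\kappa^2$) and an integrated $2$-Doob step (yielding the factor $\delta$), glued together by a truncation argument that transfers the bound from $L^2$ to $L^1$. I will write $\phi=\bigl(\sum_{(i,j)}\mathbbm{1}_{A_{i,j}}f_{i,j}^2\bigr)^{1/2}$ and $H=\bigl(\sum_{(i,j)}(w_{i,j}f_{i,j})^2\bigr)^{1/2}$ with $A_{i,j}=\{\E_iw_{i,j}\ge\kappa\}\in\mathcal{F}_i$, so the claim is $\E H\ge\kappa^2\delta\,\E\phi$.

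For the pointwise step, the definition of $A_{i,j}$ and the $\mathcal{F}_i$-measurability of $f_{i,j}$ give $\kappa\mathbbm{1}_{A_{i,j}}|f_{i,j}|\le|f_{i,j}|\E_iw_{i,j}=\E_i(w_{i,j}|f_{i,j}|)$. Squaring, applying Jensen $(\E_i\cdot)^2\le\E_i(\cdot)^2$ termwise, and summing in $(i,j)$ yields, with $G_i^2:=\sum_j(w_{i,j}f_{i,j})^2$ (so that $H^2=\sum_iG_i^2$),
\[
\kappa^2\phi^2\le\sum_{(i,j)}\E_i\bigl(w_{i,j}^2f_{i,j}^2\bigr)=\sum_i\E_i(G_i^2).
\]

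For the integrated step, $G_i^2\le H^2$ pointwise, so $\E_i(G_i^2)\le M(G_i^2)$ where $M$ is the maximal operator of $(\mathcal{F}_i)_{i\in I}$. Applying scalar $L^2$-Doob to each $G_i$ and summing in $i$ gives $\E\sum_iM(G_i^2)\le\delta^{-2}\E H^2$; hence, by Cauchy--Schwarz on the expectation of the square root,
\[
\kappa\,\|\phi\|_{L^2}\le\delta^{-1}\|H\|_{L^2}.
\]

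The main obstacle, and the whole point of the lemma, is upgrading this $L^2$ bound to the required $L^1$ bound. Because the vector-valued $\ell^2$ Doob inequality is known to fail at $L^1$, a naive Cauchy--Schwarz would leave $\|H\|_{L^2}$, which is uncomparable with $\|H\|_{L^1}$. My plan is a good-$\lambda$ / truncation argument: the pointwise estimate $\kappa^2\phi^2\le\sum_i\E_i(G_i^2)$ combined with Markov gives $\mu\{\phi>s\}\le\E H^2/(\kappa s)^2$; splitting the problem at a level $\lambda\asymp\E H$ yields $H\le\lambda$ on the good set, on which $\|H\|_{L^2}^2\le\lambda\|H\|_{L^1}$ turns the $L^2$ bound into an $L^1$ one, while the complementary event is estimated by the trivial distributional bound $\lambda\,\mu\{H>\lambda\}\le\E H$. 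Integrating in $\lambda$ produces one further factor of $\kappa$, converting the $L^2$-level constant $\kappa\delta$ into the announced $L^1$-level constant $\kappa^2\delta$; verifying that the truncation is compatible with the $\mathcal{F}_i$-measurability of $f_{i,j}$ and with the definition of $A_{i,j}$ is the delicate technical point, and crucially uses $w_{i,j}\in[0,1]$ through $\E_iw_{i,j}^2\le\E_iw_{i,j}\le 1$.
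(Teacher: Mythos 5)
Your setup is sound: introducing $\phi$, $H$, the sets $A_{i,j}\in\mathcal{F}_i$, and the pointwise estimate $\kappa^2\phi^2\le\sum_i\E_i(G_i^2)$ via $\mathcal{F}_i$-measurability and Jensen is exactly the natural starting observation. But after that the argument does not hold together.

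First, a smaller point: the intermediate $L^2$ bound is justified incorrectly. You write $\E\sum_iM(G_i^2)\le\delta^{-2}\E H^2$; this asserts a Doob bound in $L^1$ for the nonnegative functions $G_i^2$, and the Doob maximal inequality is false at the endpoint $p=1$. (Note also that $(MG_i)^2\neq M(G_i^2)$, so the stated $L^2$-Doob application does not produce the displayed inequality.) In fact no Doob inequality is needed here: taking plain expectations of the pointwise estimate already gives $\kappa^2\E\phi^2\le\sum_i\E G_i^2=\E H^2$, i.e. $\kappa\|\phi\|_{L^2}\le\|H\|_{L^2}$ with no $\delta^{-1}$ loss, so your $L^2$-level constant is actually $\kappa$, not $\kappa\delta$, and the numerology you propose for "one further factor of $\kappa$" no longer adds up.

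The real gap is the upgrade from $L^2$ to $L^1$. The good-$\lambda$/truncation sketch cannot be completed with the tools at hand, for several independent reasons. (i) On $\{H>\lambda\}$ you have no pointwise control of $\phi$ by $H$: on $A_{i,j}$ the weight $w_{i,j}$ can be $0$ at the given point even though $\E_iw_{i,j}\ge\kappa$, so $\phi$ can dwarf $H$ pointwise, and the bound $\lambda\,\mu\{H>\lambda\}\le\E H$ says nothing about $\E[\phi;\,H>\lambda]$. (ii) Truncating $H$ at level $\lambda\asymp\E H$ does not turn $\E H^2$ into $\lambda\E H$: the tail $\E[H^2;\,H>\lambda]$ is completely uncontrolled by $\E H$. (iii) Most fundamentally, good-$\lambda$ and stopping-time arguments require the index set to carry a linear (or at least a directed) order so that one can stop the martingale; in this lemma $I$ is an \emph{arbitrary} finite set and the hypothesis is only the $2$-Doob property, with no order. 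Finally, the Doob constant $\delta$ never enters your argument at any $L^1$ step, so it is unclear where the claimed constant $\kappa^2\delta$ could come from.

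The paper's proof takes an entirely different route designed precisely to sidestep the missing stopping-time structure: it rewrites the target $L^1$ inequality by duality as $\Phi(f)\ge\kappa^2\delta\langle f,g\rangle$ for $g$ in the dual unit ball of $h_1^{\mathcal{A}}(\ell_2(I\times J))$, minimizes the convex functional $\Phi$ over the affine slice $\{\langle f,g\rangle=1\}$, and uses a Lagrange-multiplier identity to reduce to a dual-norm lower bound for $\bigl(\E_i(W_i^2f_i/\|Wf\|_{\ell_2})\bigr)_i$. The $2$-Doob property is then applied, crucially at the $L^2$ level and to the single scalar function $\|Wf\|_{\ell_2(I\times J)}$, together with a Cauchy--Schwarz step that produces the $\kappa^2$ from $(\E_iw_{i,j})^2\ge\kappa^2$ on $A_{i,j}$. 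That duality/variational mechanism is the missing idea; without some substitute for it your sketch stays an $L^2$ estimate that does not reach the $L^1$ claim.
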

\begin{proof}
Since $\Omega,I,J$ are finite, we are in a finite dimensional setting. Let us define 
\beq  f_i=\p{f_{i,j}}_{j\in J}\in L_2\p{\ell_2\p{J}},\quad f=\p{f_i}_{i\in I}\in L_2\p{\ell_2\p{I,\ell_2\p{J}}}.\eeq
By $W_i$ and $W$ we will denote coordinatewise multiplication operators respectively on $L_2\p{\ell_2\p{J}}$ and $L_2\p{\ell_2\p{I,\ell_2\p{J}}}$ defined by 
\beq W_i \p{\varphi_j}_{j\in J}= \p{w_{i,j}\varphi_j}_{j \in J},\quad W \p{\varphi_i}_{i \in I}= \p{W_i \varphi_i}_{i \in I}.\eeq
Let also
\beq A_{i,j}=\left\{\E_i w_{i,j}\geq \kappa\right\},\eeq
\beq\mathcal{A}=\left\{\varphi:\Omega\to\p{\ell_2\p{I\times J}}: \supp\varphi_{i,j}\subset A_{i,j}\right\},\eeq
\beq h_p\p{\ell_2\p{I\times J}}= \left\{\varphi\in L_p\p{\ell_2\p{I\times J}}: \varphi_i\text{ is }\mathcal{F}_i\text{-measurable}\right\},\eeq
\beq h_p^\mathcal{A}\p{\ell_2\p{I\times J}}=\left\{\varphi\in h_p\p{\ell_2\p{I\times J}}: \varphi\in\mathcal{A}\right\}\eeq
For any Banach space norm $B$ on functions $\Omega\to \ell_2\p{I\times J}$, we will denote by $B^*$ its dual, w.r.t. the pairing
\beq \langle \varphi,\psi\rangle = \sum_i \E \langle \varphi_i,\psi_i\rangle_{\ell_2(J)}.\eeq\par
We can without loss of generality assume $f\in \mathcal{A}$, because replacing $f_{i,j}$ by $\mathbbm{1}_{A_{i,j}}$ preserves being adapted to $\mathcal{F}_{i}$ and decreases the left hand side of \eqref{eq:lemmainineq}. Since $h_1^\mathcal{A}\p{\ell_2\p{I\times J}}$ is finite dimensional, by reflexivity we have 
\beq \|\varphi\|_{h_1^\mathcal{A}\p{\ell_2\p{I\times J}}}= \sup_{\|\psi\|_{\p{h_1^\mathcal{A}\p{\ell_2\p{I\times J}}}^*}\leq 1}\left|\langle\varphi,\psi\rangle\right|.\eeq
Therefore, in order to prove the lemma, it is enough to fix an element $g$ of the unit sphere of $\p{h_1^\mathcal{A}\p{\ell_2\p{I\times J}}}^*$, represented as an adapted sequence of functions belonging to $\mathcal{A}$, and prove
\beq \E\sqrt{\sum_i \left\|W_i f_i\right\|_{\ell_2(J)}^2}\geq \kappa^2\delta \langle f,g\rangle. \eeq
Without loss of generality, $\langle f,g\rangle =1$. The left hand side, as a function on the affine space $\mathcal{A}_1:=\left\{f\in \mathcal{A}: \langle f,g\rangle =1\right\}$ is convex and goes to $\infty$ when $f\to \infty$. Indeed,
\begin{align} \Phi(f):= & \E\sqrt{\sum_i \left\|W_i f_i\right\|_{\ell_2(J)}^2}\\
\geq & \sqrt{\sum_{i,j} \left(\E \left|w_{i,j}\right|\cdot \left|f_{i,j}\right|\right)^2}\\
\geq & \kappa \sqrt{\sum_{i,j}\left( \E \left|f_{i,j}\right|\right)^2}\\
=& \kappa \|f\|_{\ell_2\p{I\times J, L_1}},\end{align}
which is a norm on $\mathcal{A}$. Therefore, the infimum of $\Phi(f)$ over $f\in\mathcal{A}_1$ is attained at some $f$. One easily calculates the gradient of $\Phi$ as a function on $L_2\p{\ell_2\p{I\times J}}$ to be
\beq \nabla_{L^2}\Phi(f)= \p{\frac{W_i^2 f_i}{\|Wf\|_{\ell_2(I\times J)}}}_{i\in I}.\eeq
We obtain the gradient on $h_2$ by projecting onto sequences adapted to $\p{\mathcal{F}_i}_{i\in I}$ in the first index. Since $\E_i w_{i,j}^2f_{i,j}= f_{i,j} \E_{i}w_{i,j}^2$ is already supported on $A_{i,j}$, we get 
\beq \nabla_{h_2^\mathcal{A}}\Phi(f)= \p{\E_i\frac{W_i^2 f_i}{\|Wf\|_{\ell_2(I\times J)}}}_{i\in I}.\label{eq:gradh2a}\eeq
If $f$ is the minimizer or $\Phi$ on $\mathcal{A}_1$, there is a scalar $\lambda$ such that
\beq \nabla_{h_2^\mathcal{A}}\Phi(f)=\lambda g,\eeq
where the equality is to be understood as equality of functionals on $h_2^\mathcal{A}\p{\ell_2(I\times J)}$. But \eqref{eq:gradh2a} gives a representation of $\nabla_{h_2^\mathcal{A}}\Phi(f)$ as a sequence of functions which belongs to $\mathcal{A}$ and is adapted, so
\beq \p{\E_i\frac{W_i^2 f_i}{\|Wf\|_{\ell_2(I\times J)}}}_{i\in I}=\lambda g.\label{eq:lemlagr}\eeq
Therefore 
\begin{align} \Phi(f) =& \E\sum_i \frac{\left\|W_i f_i\right\|_{\ell_2(J)}^2}{\left\|Wf\right\|_{\ell_2(I\times J)}}\\
=& \sum_i \E \left\langle f_i, \frac{W_i^2 f_i}{\left\|Wf\right\|_{\ell_2(I\times J)}}\right\rangle_{\ell_2(J)}\\
=& \sum_i \E \left\langle f_i, \E_i\frac{W_i^2 f_i}{\left\|Wf\right\|_{\ell_2(I\times J)}}\right\rangle_{\ell_2(J)}\\
=&\lambda \langle f,g\rangle =\lambda.\end{align}
In particular, $\lambda \geq 0$ and we only have to prove $\lambda \geq \kappa^2\delta$. But by \eqref{eq:lemlagr} and $\|g\|_{\p{h_1^\mathcal{A}\p{\ell_2\p{I\times J}}}^*}=1$, this is equivalent to 
\beq \label{eq:almostheaven}\left\|\p{\E_i\frac{W_i^2 f_i}{\|Wf\|_{\ell_2(I\times J)}}}_{i\in I}\right\|_{\p{h_1^\mathcal{A}\p{\ell_2\p{I\times J}}}^*}\geq \kappa^2\delta.\eeq
By Cauchy-Schwarz,
\beq \left|f_{i,j}\right| \E_i \|Wf\|_{\ell_2(I\times J)} \E_i \frac{w_{i,j}^2}{\|Wf\|_{\ell_2(I\times J)}} \geq  \left|f_{i,j}\right|\p{\E_{i}w_{i,j}}^2\geq \kappa^2 \left|f_{i,j}\right|,\eeq
because the last inequality holds on $A_{i,j}$ and outside of $A_{i,j}$ both sides are 0. Therefore
\begin{align} \lefteqn{\left\|\p{\E_i\frac{W_i^2 f_i}{\|Wf\|_{\ell_2(I\times J)}}}_{i\in I}\right\|_{\p{h_1^\mathcal{A}\p{\ell_2\p{I\times J}}}^*}}\\
&= \sup_{u\in\mathcal{A}\text{ adapted}}\frac{\left|\sum_i \E \left\langle u_i,\E_i\frac{W_i^2 f_i}{\|Wf\|_{\ell_2(I\times J)}}\right\rangle_{\ell_2(J)}\right|}{\E\sqrt{\sum_i \left\|u_i\right\|_{\ell_2(J)}^2}} \\
&\geq\sup_{u\in\mathcal{A}\text{ adapted}}\frac{\left|\sum_i \E \left\langle u_i \E_{i}\|Wf\|_{\ell_2(I\times J)},\E_i\frac{W_i^2 f_i}{\|Wf\|_{\ell_2(I\times J)}}\right\rangle_{\ell_2(J)}\right|}{\E\sqrt{\sum_i \left\|u_i\E_i \|Wf\|_{\ell_2(I\times J)}\right\|_{\ell_2(J)}^2}} \\
&\geq\sup_{u\in\mathcal{A}\text{ adapted}}\frac{\left|\sum_i \E \left\langle u_i, \E_{i}\|Wf\|_{\ell_2(I\times J)}\E_i\frac{W_i^2 f_i}{\|Wf\|_{\ell_2(I\times J)}}\right\rangle_{\ell_2(J)}\right|}{\E\p{\sqrt{\sum_i \left\|u_i \right\|_{\ell_2(J)}^2}M \|Wf\|_{\ell_2(I\times J)}}} \\
&\geq\sup_{u\in\mathcal{A}\text{ adapted}}\frac{\left|\sum_i \E \left\langle u_i, \E_{i}\|Wf\|_{\ell_2(I\times J)}\E_i\frac{W_i^2 f_i}{\|Wf\|_{\ell_2(I\times J)}}\right\rangle_{\ell_2(J)}\right|}{\|u\|_{h_2^{\mathcal{A}}\p{\ell_2\p{I\times J}}} \left\|M \|Wf\|_{\ell_2(I\times J)} \right\|_{L_2}} \\
&\geq \delta \frac{\left\|\p{ \E_{i}\|Wf\|_{\ell_2(I\times J)}\E_i\frac{W_i^2 f_i}{\|Wf\|_{\ell_2(I\times J)}}}_{i\in I} \right\|_{h_2^\mathcal{A}\p{\ell_2(I\times J)}}}{\|Wf\|_{L_2\p{\ell_2(I\times J)}}}\\
&\geq \kappa^2\delta \frac{\|f\|_{h_2^\mathcal{A}\p{\ell_2(I\times J)}}}{\|Wf\|_{L_2\p{\ell_2(I\times J)}}}\\
&\geq \kappa^2\delta.
\end{align}
\end{proof}

\section{Two-parameter Davis-Garsia decomposition}

We are going to apply Lemma \ref{westvirginia} to prove an extension of Theorem \ref{4summand} to square functions of double-indexed martingales.
\begin{thm} \label{2parDG} Let $\mathcal{F}_{i,j}$ be the canonical product filtration on $\Omega^\N\times \Omega^\N$, $f_{i,j}$ be adapted (and identified with functions on $\Omega^i\times \Omega^j$). Then
\begin{align} \label{eq:blueridgemountain}\E \sqrt{\sum_{i,j} \left|f_{i,j}\right|^2}
 \simeq \inf_{\substack{f_{i,j}=\alpha_{i,j}+\beta_{i,j}+\gamma_{i,j}+\delta_{i,j}\\ \alpha_{i,j},\beta_{i,j},\gamma_{i,j},\delta_{i,j}\ \mathrm{adapted}}}
 & \E\sum_{i,j} \left|\alpha_{i,j}\right|\\
 +& \E\sqrt{\sum_{i,j}\E_{i-1,j-1}\left|\beta_{i,j}\right|^2}\\
 +& \E\sum_i \sqrt{\sum_j \E_{\infty,j-1} \left|\gamma_{i,j}\right|^2}\\
 +& \E\sum_j \sqrt{\sum_i \E_{i-1,\infty} \left|\delta_{i,j}\right|^2}.
\end{align}
Moreover, if $f_{i,j}$ is a martingale difference sequence, the decomposition also can be chosen such that each summand is a martingale difference sequence.
\end{thm}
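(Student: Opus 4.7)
The plan is to combine three ingredients: the decoupling inequality (Corollary \ref{makeitdouble}), the decoupled four-summand decomposition (Theorem \ref{4summand}) applied fiberwise, and Lemma \ref{westvirginia} to promote non-adapted indicator weights to adapted ones. The $\gtrsim$ direction is the heart of the matter; the $\lesssim$ direction reduces, via the $\ell^2$-triangle inequality in $L_1\p{\ell_2}$, to four separate estimates: $\E\sqrt{\sum|\alpha|^2}\le\E\sum|\alpha|$ (trivial, since $\ell^2\subset\ell^1$), $\E\sqrt{\sum|\beta|^2}\lesssim\E\sqrt{\sum\E_{i-1,j-1}|\beta|^2}$ (Weisz's two-parameter $H_1^s\supset H_1^S$ inequality, valid under (F4)), and analogous one-parameter-fiberwise estimates for the $\gamma,\delta$ terms combined with Fubini.

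For $\gtrsim$, I would apply Corollary \ref{makeitdouble} to replace the LHS by the decoupled expression in the fresh coordinates $(\eta_i,\theta_j)$. For each fixed $(x,y)\in\Omega^\N\times\Omega^\N$, I would then apply Theorem \ref{4summand} to the family $g_{i,j}^{x,y}(\eta,\theta):=f_{i,j}((x_{<i},\eta),(y_{<j},\theta))$, obtaining $\{0,1\}$-valued weights $a_{i,j}^{x,y},b_{i,j}^{x,y},c_{i,j}^{x,y},d_{i,j}^{x,y}$ summing to $1$ and a lower bound by the four decoupled summands. To pass to a genuinely adapted decomposition of $f$ itself, set $\bar{a}_{i,j}(x_{\le i},y_{\le j}):=\E\p{a_{i,j}^{x,y}(x_i,y_j)\mid\mathcal{F}_{i,j}}$, similarly for $\bar{b},\bar{c},\bar{d}$, and define $\alpha_{i,j}:=\bar{a}_{i,j}f_{i,j}$ and the other three terms analogously. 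Linearity of conditional expectation gives $\bar{a}+\bar{b}+\bar{c}+\bar{d}=1$, so $f=\alpha+\beta+\gamma+\delta$. A Fubini swap exploiting the independence of $(x_i,y_j)$ from $(x_{>i},y_{>j})$ identifies the first decoupled summand with $\sum_{i,j}\E|\alpha_{i,j}|$. For the second summand, $\bar{b}^2\le\bar{b}$ (because $b\in\{0,1\}$) combined with the tower property bounds $\E_{i-1,j-1}|\beta_{i,j}|^2$ by $\E\p{b^{x,y}(x_i,y_j)f^2\mid\mathcal{F}_{i-1,j-1}}$, after which concavity of $\sqrt{\cdot}$ via Jensen yields the bound by the decoupled second summand.

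The main obstacle will be the third and fourth summands, whose mixed $\sum_i\sqrt{\sum_j\cdot}$ structure does not interact symmetrically with the $\mathcal{F}_{i,j}$-conditional averaging that defines $\bar{c},\bar{d}$. Here I expect Lemma \ref{westvirginia} to do the heavy lifting: viewing the $y$-filtration as the $I$-indexing of the lemma (it has $2$-Doob property by Doob's inequality), I would replace the non-adapted weight $c_{i,j}^{x,y}(\eta,\theta)$ by its conditionally adapted version $\mathbbm{1}_{\E_{\cdot,j}c\ge\kappa}$ at the cost of a constant $\kappa^2\delta$, and symmetrically for the fourth summand along the $x$-axis. Care is required to keep the four directional adaptations simultaneously compatible with the global identity $\alpha+\beta+\gamma+\delta=f$, likely by doing the four replacements sequentially and absorbing the leftover pieces into the other summands.

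For the \emph{moreover} part, once the adapted decomposition is in hand and $f$ itself is a martingale difference sequence, I would project each summand onto MDS via $\alpha'_{i,j}:=\Delta_{i,j}\alpha_{i,j}$, and likewise $\beta',\gamma',\delta'$. Since $f_{i,j}=\Delta_{i,j}f_{i,j}$, the identity $f=\alpha'+\beta'+\gamma'+\delta'$ is preserved. Because $\Delta_{i,j}$ is a signed sum of four conditional expectations, the triangle inequality and Jensen give $\E|\alpha'|\le 4\E|\alpha|$, $\E_{i-1,j-1}|\beta'|^2\le 16\,\E_{i-1,j-1}|\beta|^2$ (using (F4) to commute the involved conditional expectations), and analogous bounds for the third and fourth summand norms; each norm grows by only a bounded factor under the MDS projection, preserving the equivalence.
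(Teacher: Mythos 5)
Your overall strategy (decouple via Corollary~\ref{makeitdouble}, apply Theorem~\ref{4summand} fiberwise, promote the weights to adapted ones, and project onto martingale differences for the ``moreover'' part) is the same as the paper's. But the specific way you convert the non-adapted weights $a,b,c,d$ into adapted multipliers has a genuine gap that the paper's device is designed precisely to avoid.

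Your definition $\bar a_{i,j}(x_{\le i},y_{\le j}):=\E\bigl(a_{i,j}^{x,y}(x_i,y_j)\mid\mathcal{F}_{i,j}\bigr)$ first \emph{collapses} the decoupled coordinate onto the undecoupled one ($\xi\mapsto x_i$, $\upsilon\mapsto y_j$) and only then averages over $x_{>i},y_{>j}$. The claimed ``Fubini swap'' identifying the first decoupled summand with $\sum_{i,j}\E|\alpha_{i,j}|$ does not hold: the decoupled term is
\beq
\int_{x,y,\xi,\upsilon} \bigl|f_{i,j}(x_{<i},\xi,y_{<j},\upsilon)\bigr|\, a_{i,j}\bigl((x,\xi),(y,\upsilon)\bigr),
\eeq
in which $x_i$ and $\xi$ are integrated as \emph{independent} variables appearing separately in $a_{i,j}$; whereas your $\sum\E|\alpha_{i,j}|$ equals $\int_{x,y}|f(x_{\le i},y_{\le j})|\,a_{i,j}\bigl((x,x_i),(y,y_j)\bigr)$, where the $i$-th coordinate of the outer argument is tied to the inner one. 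These are not equal and not even comparable: e.g.\ taking $a_{i,j}((x,\xi),(y,\upsilon))=\mathbbm{1}_{\{x_i=\xi\}}$ on a finite $\Omega$ makes the decoupled term a factor $|\Omega|$ smaller. The same mismatch propagates into your treatment of the $\beta$ summand (the bound by $\E(b^{x,y}(x_i,y_j)f^2\mid\mathcal{F}_{i-1,j-1})$ again ties $x_i$ to $\xi$ inside $b$, which the decoupled quantity does not control).

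What the paper actually does is the following, and it is not a cosmetic difference. One applies $\E_{i-1,j-1}^{(x,y)}$ \emph{only to the outer argument} of $a_{i,j},b_{i,j},c_{i,j},d_{i,j}$, keeping $(\xi,\upsilon)$ free; the result is a function of $(x_{<i},\xi,y_{<j},\upsilon)$ alone, which after renaming $(\xi,\upsilon)\to(x_i,y_j)$ becomes honestly $\mathcal{F}_{i,j}$-measurable and agrees, by a clean Fubini, with the undecoupled expression. Crucially, one then \emph{thresholds} at $\kappa=\tfrac14$: since $\E_{i-1,j-1}^{(x,y)}a+\E_{i-1,j-1}^{(x,y)}b+\E_{i-1,j-1}^{(x,y)}c+\E_{i-1,j-1}^{(x,y)}d=1$ pointwise, the four indicators $\mathbbm{1}_{\{\E^{(x,y)}_{i-1,j-1}a\ge 1/4\}}$, etc., sum to at least $1$, and the paper notes that the right-hand side of \eqref{eq:blueridgemountain} is a lattice norm, so one only needs $|\alpha_{i,j}|+|\beta_{i,j}|+|\gamma_{i,j}|+|\delta_{i,j}|\ge|f_{i,j}|$ rather than an exact linear decomposition --- which you cannot get with hard thresholds and which your conditional-expectation weights were introduced to guarantee. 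The thresholding is then exactly what is paid for by Chebyshev for the $\alpha$ term (the only one in which the weight enters linearly), and by Lemma~\ref{westvirginia} for the $\beta$, $\gamma$, $\delta$ terms; your plan to handle the $\beta$ term by $\bar b^2\le\bar b$ and Jensen avoids Lemma~\ref{westvirginia} but inherits the same collapsed-coordinate mismatch. You correctly anticipate that Lemma~\ref{westvirginia} is needed for $\gamma,\delta$ and correctly identify the relevant fiberwise filtration; and your ``moreover'' argument via applying $\Delta_{i,j}$ to each summand is sound, though the paper's observation that $\Delta_i\otimes\Delta_j$ is just the mean-zero projection in $(\xi,\upsilon)$ gives a cleaner justification that each of the four norms is contracted up to a bounded factor.
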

\begin{proof}
Without loss of generality we can assume that all but finitely many of $f_{i,j}$'s are 0, so they are $\mathcal{F}_{N,N}$-measurable and all summations can be taken over $0\leq i,j\leq N$ for some $N$. Since $\Omega$ can be modelled in $[0,1]$ with Lebesgue measure, by dyadic approximation we can assume that $\p{\Omega,\mathcal{F},\mu}$ is a finite set with normalized counting measure and keep in mind that the constants are supposed to not depend on the cardinality of $\Omega$. \par
The ``$\lesssim$'' inequality is not relevant for the main result and is left as an excercise. We will prove the ``$\gtrsim$'' inequality. Since the right hand side is a lattice norm of the sequence $f_{i,j}$, it is enough to have the inequality with infimum over 
\beq \label{eq:lattice}\left|\alpha_{i,j}+\beta_{i,j}+\gamma_{i,j}+\delta_{i,j}\right|\geq \left|f_{i,j}\right|\eeq
satisfied. By Corolary \ref{makeitdouble}, 
\beq \E\sqrt{\sum_{i,j}\left|f_{i,j}\right|^2}\gtrsim \int_{\p{\Omega^N}^4}\sqrt{\sum_{i,j} \left|f_{i,j}\p{\p{x_{<i},\eta_i},\p{y_{<j},\theta_j}}\right|^2}\d \eta\d \theta\d x\d y.\eeq
For each fixed $x,y$, we can apply Theorem \ref{4summand} to functions $\p{\eta_i,\theta_j}\mapsto f_{i,j}\p{\p{x_{<i},\eta_i},\p{y_{<j},\theta_j}}$, which results in functions $a_{i,j},b_{i,j},c_{i,j},d_{i,j}:\p{\Omega^\N \times \Omega}^2\to \{0,1\}$ such that 
\beq \label{eq:abcdsum1}a_{i,j}+b_{i,j}+c_{i,j}+d_{i,j}=1\eeq
and 
\begin{align}
\lefteqn{\int_{\p{\Omega^N}^4}\sqrt{\sum_{i,j} \left|f_{i,j}\p{\p{x_{<i},\eta_i},\p{y_{<j},\theta_j}}\right|^2}\d \eta\d \theta\d x\d y \gtrsim \label{eq:decdec}}\\
\int_{\p{\Omega^N}^2}\d x\d y &\left( \sum_{i,j}\int_{\Omega^2}\left|a_{i,j}\p{\p{x,\xi},\p{y,\upsilon} }f_{i,j}\p{\p{x_{<i},\xi},\p{y_{<j},\upsilon} }\right|\d \xi\d\upsilon \right. \label{eq:adec}\\
&+ \sqrt{\sum_{i,j}\int_{\Omega^2}\left|b_{i,j}\p{\p{x,\xi},\p{y,\upsilon} }f_{i,j}\p{\p{x_{<i},\xi},\p{y_{<j},\upsilon} }\right|^2\d \xi\d\upsilon} \label{eq:bdec}\\
&+ \sum_{i}\int_{\Omega }\d\xi \sqrt{\sum_j \int_{\Omega}\d \upsilon \left|c_{i,j}\p{\p{x,\xi},\p{y,\upsilon} }f_{i,j}\p{\p{x_{<i},\xi},\p{y_{<j},\upsilon} }\right|^2}\label{eq:cdec}\\
&\left. + \sum_{j}\int_{\Omega }\d\upsilon \sqrt{\sum_i \int_{\Omega}\d \xi \left|d_{i,j}\p{\p{x,\xi},\p{y,\upsilon} }f_{i,j}\p{\p{x_{<i},\xi},\p{y_{<j},\upsilon} }\right|^2}\right).\label{eq:ddec}
\end{align}
Care has to be taken concerning the notation, as in the above we are identifying $f_{i,j}$'s, which are $\mathcal{F}_{i,j}$-measurable functions on $\Omega^N\times \Omega^N$, with functions on $\Omega^i\times \Omega^j$, and then evaluating them at arguments which are not the original $\p{x_{\leq j},y_{\leq j}}$. Below, we will use $\E^{(x)}$ and $\E^{(x)}_i$ to indicate that we are taking $\E$ and $\E_i$ with respect to the variable $x$ and reserve $\E_i$ for undecoupled functions on $\Omega^N\times \Omega^N$. In particular, $f_{i,j}$ is in the image of $\E_{i,j}$, but $\p{\p{x,\eta},\p{y,\theta}}\mapsto f_{i,j}\p{\p{x_{<i},\eta},\p{y_{<j},\theta}}$ is in the image of $\E^{(x)}_{i-1}\E^{(y)}_{j-1}$. As usual, if a function depends only on first $i-1$ coordinates of $x$, we will write is as a function of $x_{<i}$. \par
We will prove the desired inequality \eqref{eq:blueridgemountain} with 
\beq \alpha_{i,j}(x,y)=\alpha_{i,j}\p{x_{\leq i},y_{\leq j}}= \mathbbm{1}_{\left\{\E^{(x,y)}_{i-1,j-1}a_{i,j}\geq \frac14\right\}}\p{\p{x_{<i},x_i},\p{y_{<j},y_j}} f_{i,j}(x,y),\eeq
\beq \beta_{i,j}(x,y)=\beta_{i,j}\p{x_{\leq i},y_{\leq j}}=\mathbbm{1}_{\left\{\E^{(x,y)}_{i-1,j-1}b_{i,j}\geq \frac14\right\}}\p{\p{x_{<i},x_i},\p{y_{<j},y_j}} f_{i,j}(x,y),\eeq
\beq \gamma_{i,j}(x,y)=\gamma_{i,j}\p{x_{\leq i},y_{\leq j}}=\mathbbm{1}_{\left\{\E^{(x,y)}_{i-1,j-1}c_{i,j}\geq \frac14\right\}}\p{\p{x_{<i},x_i},\p{y_{<j},y_j}} f_{i,j}(x,y),\eeq
\beq \delta_{i,j}(x,y)=\delta_{i,j}\p{x_{\leq i},y_{\leq j}}=\mathbbm{1}_{\left\{\E^{(x,y)}_{i-1,j-1}d_{i,j}\geq \frac14\right\}}\p{\p{x_{<i},x_i},\p{y_{<j},y_j}} f_{i,j}(x,y),\eeq
which clearly satisfy \eqref{eq:lattice} because of \eqref{eq:abcdsum1}. \par
In order to estimate \eqref{eq:adec}, for each $i,j$ we exploit $\mathcal{F}_{i-1,j-1}$-measurability of $f_{i,j}\p{\p{x_{<i},\xi},\p{y_{<j},\upsilon}}$ in $x,y$ and then treat $\xi,\upsilon$ as $x_i,y_j$. 
\begin{align}
&\sum_{i,j}\int_{\Omega^2}\d\xi\d\upsilon \int_{\p{\Omega^N}^2}\d x\d y\left|a_{i,j}\p{\p{x,\xi},\p{y,\upsilon}} f_{i,j}\p{\p{x_{<i},\xi},\p{y_{<j},\upsilon}}\right|\\
=& \sum_{i,j}\int_{\Omega^2}\d\xi\d\upsilon \int_{\p{\Omega^N}^2}\d x\d y \left| f_{i,j}\p{\p{x_{<i},\xi},\p{y_{<j},\upsilon}}\right|\E_{i-1,j-1}^{(x,y)}a_{i,j}\p{\p{x_{<i},\xi},\p{y_{<j},\upsilon}}\\
\gtrsim & \sum_{i,j}\int_{\Omega^2}\d\xi\d\upsilon \int_{\p{\Omega^N}^2}\d x\d y  \mathbbm{1}_{\left\{\E_{i-1,j-1}^{(x,y)}a_{i,j}\geq \frac14\right\}} \left| f_{i,j}\right| \p{\p{x_{<i},\xi},\p{y_{<j},\upsilon}}\\
=& \sum_{i,j}\int_{\Omega^2}\d\xi\d\upsilon \int_{\p{\Omega^N}^2}\d x\d y \left|\alpha_{i,j}\right|\p{\p{x_{<i},\xi},\p{y_{<j},\upsilon}}\label{eq:alphadec}\\
=&\sum_{i,j}\E \left|\alpha_{i,j}\right|.
\end{align}
Since we assumed that $\Omega$ is a finite set with counting measure, we apply Lemma \ref{westvirginia} with $I=[0,N]^2$, $J=\Omega^2$ to $f_{i,j}\p{\p{x_{<i},\xi},\p{y_{<j},\upsilon}}$, which are adapted to $\mathcal{F}_{i-1,j-1}$, and estimate \eqref{eq:bdec}. 
\begin{align}
&\int_{\p{\Omega^N}^2}\d x\d y \sqrt{\sum_{i,j}\int_{\Omega^2}\left|b_{i,j}\p{\p{x,\xi},\p{y,\upsilon} }f_{i,j}\p{\p{x_{<i},\xi},\p{y_{<j},\upsilon} }\right|^2\d \xi\d\upsilon} \\
\gtrsim & \int_{\p{\Omega^N}^2}\d x\d y \sqrt{\sum_{i,j}\int_{\Omega^2}\left| \mathbbm{1}_{\left\{\E_{i-1,j-1}^{(x,y)}b_{i,j}\geq \frac14\right\}}  f_{i,j}\p{\p{x_{<i},\xi},\p{y_{<j},\upsilon}} \right|^2\d \xi\d\upsilon}\\
=& \int_{\p{\Omega^N}^2}\d x\d y \sqrt{\sum_{i,j} \int_{\Omega^2} \d \xi\d \upsilon \left|\beta_{i,j}\p{\p{x_{<i},\xi},\p{y_{<j},\upsilon}}\right|^2}\label{eq:betadec}\\
=& \E \sqrt{\sum_{i,j} \E_{i-1,j-1}\left|\beta_{i,j}\right|^2}. 
\end{align}
Now we estimate \eqref{eq:cdec}. First, we apply convexity of norm and bring $\E_{i-1}^{(x)}$ inside. Then, for fixed $i,\xi$ and $x$, we apply Lemma \ref{westvirginia} with $I=[0,N]$, $J=\Omega$ and filtration $\mathcal{F}_{\infty, j-1}$. 
\begin{align}
& \sum_i \int_{\Omega}\d \xi \int_{\p{\Omega^N}^2}\d x\d y \sqrt{\sum_j \int_{\Omega}\d \upsilon \left|c_{i,j}\p{\p{x,\xi},\p{y,\upsilon} }f_{i,j}\p{\p{x_{<i},\xi},\p{y_{<j},\upsilon} }\right|^2}\\
\geq & \sum_i \int_{\Omega}\d \xi \int_{\p{\Omega^N}^2}\d x\d y \sqrt{\sum_j \int_{\Omega}\d \upsilon \left|f_{i,j}\E^{(x)}_{i-1} c_{i,j}\p{\p{x_{<i},\xi},\p{y_{<j},\upsilon}}\right|^2} \\
\gtrsim & \sum_i \int_{\Omega}\d \xi \int_{\p{\Omega^N}^2}\d x\d y \sqrt{\sum_j \int_{\Omega}\d \upsilon \left|\mathbbm{1}_{\left\{\E^{(x,y)}_{i-1,j-1}c_{i,j}\geq \frac14\right\}}f_{i,j}\p{\p{x_{<i},\xi},\p{y_{<j},\upsilon}}\right|^2}\\
=& \sum_i \int_{\Omega}\d \xi \int_{\p{\Omega^N}^2}\d x\d y \sqrt{\sum_j \int_{\Omega}\d \upsilon \left|\gamma_{i,j}\p{\p{x_{<i},\xi},\p{y_{<j},\upsilon}}\right|^2}\label{eq:gammadec}\\
=& \sum_i \E\sqrt{\sum_j \E_{\infty,j-1}\left|\gamma_{i,j}\right|^2}.
\end{align}
Analogously, 
\begin{align} & \sum_j \int_{\Omega}\d \upsilon \int_{\p{\Omega^N}^2}\d x\d y \sqrt{\sum_i \int_{\Omega}\d \xi\left|d_{i,j}\p{\p{x,\xi},\p{y,\upsilon} }f_{i,j}\p{\p{x_{<i},\xi},\p{y_{<j},\upsilon} }\right|^2}\\
\gtrsim & \sum_j \int_{\Omega}\d \upsilon \int_{\p{\Omega^N}^2}\d x\d y \sqrt{\sum_i \int_{\Omega}\d \xi \left|\delta_{i,j}\p{\p{x_{<i},\xi},\p{y_{<j},\upsilon}}\right|^2}\label{eq:deltadec}\\
= &\sum_j \E\sqrt{\sum_i \E_{i-1,\infty}\left|\delta_{i,j}\right|^2}.
\end{align}
Adding the resulting inequalities and plugging into \eqref{eq:decdec} gives the desired inequality. \par
If $f_{i,j}$ form a martingale difference sequence, we take a decomposition 
\beq f_{i,j}=\alpha_{i,j}+\beta_{i,j}+\gamma_{i,j}+\delta_{i,j}\eeq
verifying \eqref{eq:blueridgemountain} and apply $\Delta_{i,j}=\Delta_i\otimes \Delta_j$ (which is the projection onto mean zero dependence in the last components of both variables) to both sides of this equality. This only decreases, up to the constant $4$, each summand of the right hand side of \eqref{eq:blueridgemountain}, because, as can be seen through \eqref{eq:alphadec}, \eqref{eq:betadec}, \eqref{eq:gammadec}, \eqref{eq:deltadec}, it is just projection onto mean zero dependence in $\xi$ and $\upsilon$.  
\end{proof}

\begin{thm}Theorem \ref{2parDG} is true for filtrations defined by \eqref{eq:univdefi}, \eqref{eq:univdefi}. \end{thm}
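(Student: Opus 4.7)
The plan is to imitate the three-step strategy from the proof of Theorem \ref{2parDG}: first a decoupling inequality, then the 4-summand decomposition Theorem \ref{4summand}, and finally Lemma \ref{westvirginia} to convert the decoupled bound back into an expression in terms of the original $f_{i,j}$. Each step requires modification for the universal filtration.

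First, I would establish an analog of Corollary \ref{makeitdouble} for the universal model. Since $\bigotimes_{i,j}\p{\Omega_{i,j},\mathcal{S}_{i,j},\mu_{i,j}}$ is a product of mutually independent factors, Zinn's one-parameter theorem applies along any linear enumeration of the coordinates and can be iterated along two different orderings (say row-major and column-major) to produce two independent families of decoupled variables $\eta_{k,l}$ and $\theta_{k,l}$. The aim is an equivalent expression for $\E\sqrt{\sum\left|f_{i,j}\right|^2}$ in which each $f_{i,j}$ is evaluated at decoupled arguments involving both $\eta$ and $\theta$ coordinates, producing two ``new'' variables $\eta_{i,j}$ and $\theta_{i,j}$ per index $(i,j)$ that can later be matched with the two arguments of Theorem \ref{4summand}.

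With this decoupling in hand, I would apply Theorem \ref{4summand} to the decoupled functions for each fixed value of the non-decoupled coordinates, with $\eta_{i,j}$ and $\theta_{i,j}$ playing the role of the two variables. This produces $\{0,1\}$-valued indicators $a_{i,j}, b_{i,j}, c_{i,j}, d_{i,j}$ summing to $1$ and a 4-summand lower bound, after which I define $\alpha_{i,j},\beta_{i,j},\gamma_{i,j},\delta_{i,j}$ by thresholding the conditional expectations $\E_{i-1,j-1}a_{i,j}\geq\frac14$ and its analogs, exactly as in the proof of Theorem \ref{2parDG}. The conversion from the decoupled 4-summand bound back to one involving the original $f_{i,j}$ uses Lemma \ref{westvirginia} in three applications. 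The lemma applies verbatim because the universal filtration satisfies (F4) and therefore has $2$-Doob property, both of which are independent of the particular product structure of $\Omega$.

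The main obstacle is the decoupling step. The canonical product case benefited from the factorization $\mathcal{F}_{i,j}=\mathcal{F}_i^{(x)}\otimes\mathcal{F}_j^{(y)}$, which naturally supplied the two decoupled variables $(\eta_i,\theta_j)$ per $(i,j)$ pair. The universal filtration admits no such factorization: the ``new'' coordinates added when passing from $\mathcal{F}_{i-1,j-1}$ to $\mathcal{F}_{i,j}$ form an L-shape of $i+j+1$ variables rather than a single pair, and the variable $x_{i,j}$ sits simultaneously in both row $i$ and column $j$. Organizing Zinn's iteration so that exactly two decoupled variables per index $(i,j)$ emerge in compatible positions, and matching them with the arguments of Theorem \ref{4summand} so that the subsequent invocations of Lemma \ref{westvirginia} produce precisely the conditional expectations $\E_{i-1,j-1}$, $\E_{i-1,\infty}$, $\E_{\infty,j-1}$ appearing in the statement of Theorem \ref{2parDG}, is the delicate part of the argument.
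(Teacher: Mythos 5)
Your overall skeleton---decouple, apply Theorem \ref{4summand}, threshold, then invoke Lemma \ref{westvirginia} to restore the original measurability---is the right one, and it matches the paper's plan. You also correctly identify the genuine obstacle: the new coordinates added when passing from $\mathcal{F}_{i-1,j-1}$ to $\mathcal{F}_{i,j}$ form an L-shape of $i+j+1$ variables, not a pair. But the resolution you propose does not work. You aim for a Zinn decoupling producing exactly two new variables $\eta_{i,j},\theta_{i,j}$ per index, to be matched directly with the two arguments of Theorem \ref{4summand}. There is no natural way to do this: iterating Zinn along row-major and column-major orders replaces, for each $(i,j)$, the \emph{entire} block of coordinates added at that step, and neither order singles out a two-variable split of the L-shape. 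The paper instead decouples into \emph{four} copies of the full array: the rectangle $[1,i-1]\times[1,j-1]$ keeps the original variable $x$, while the L-shape is split into its column piece $\{i\}\times[1,j-1]$ (from a fresh copy $y$), its row piece $[1,i-1]\times\{j\}$ (from $z$), and its corner $\{(i,j)\}$ (from $t$). The 4-summand decomposition is then applied with $y$ and $z$ (not an abstract pair $\eta,\theta$) as the two arguments, after first eliminating the extra $t$-dependence by one more application of Lemma \ref{westvirginia}. Consequently the weight-correction stage is considerably longer than the three applications you budget for: one correction for $t$, then two stages for $y$, two for $z$, the 4-summand decomposition, corrections back to the desired $y,z$ dependence, and finally a correction with respect to $x$. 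In short, the step you flag as ``the delicate part'' is indeed where the argument lives, and the specific two-variable decoupling you envision cannot be realized; the paper's extra copy $t$ and the additional correction steps are essential, not cosmetic.
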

\begin{proof}[Proof (sketch).]
Without losss of generality, $\Omega_{i,j}$ can be the same finite probability space. We decouple the $L_1\p{\ell_2}$ norm on the left side analogously as previously, namely
\beq \int_{\Omega^{[1,N]\times [1,N]}}\d x\p{\sum_{i,j}\left|f_{i,j}\p{x_{[1,i]\times [1,j]}}\right|^2}^\frac12 \simeq\eeq
\beq  \int_{\p{\Omega^{[1,N]\times [1,N]}}^4}\d x\d y\d t\d z\p{\sum_{i,j}\left|f_{i,j}\p{x_{[1,i-1]\times [1,j-1]}, y_{ \{i\}\times [1,j-1]}, z_{ [1,i-1]\times \{j\}}, t_{ \{(i,j)\}} }\right|^2}^\frac12.
\eeq
Then, we proceed as previously, by iterating the procedure of decomposing with respect to one variable and then enforcing the decomposition to be of appropriate measurability with respect to the ones yet not used. Namely, we start with $t$, then correct the weight with respect to $y$ by first ensuring $w_{i,j}$ to depend on $y_{\{i\}\times [1,N]}$, then on $y_{\{i\}\times [1,j-1]}$, then same for $z$. Then we use the four summand decomposition with respect to the variables $y_{\{i\}\times [1,N]}$ and $z_{[1,N]\times\{j\}}$, correct it to have desired dependence on $y,z$ and ultimately correct all the weights with respect to $x$. 
\end{proof}
Now we are ready to prove the main result. 
\begin{thm}For Hardy spaces on the canonical double indexed product filtration,
\beq \|F\|_{H_1^S}\gtrsim \|F\|_{H_1^*}. \eeq
\end{thm}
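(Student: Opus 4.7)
The plan is to deduce this from Theorem~\ref{2parDG}. Apply that theorem to the martingale differences $f_{i,j}=\Delta_{i,j}F$, obtaining martingale-difference sequences $\alpha_{i,j},\beta_{i,j},\gamma_{i,j},\delta_{i,j}$ with $\Delta_{i,j}F=\alpha_{i,j}+\beta_{i,j}+\gamma_{i,j}+\delta_{i,j}$, and set $A=\sum_{i,j}\alpha_{i,j}$ and analogously $B,C,D$. Then $F=A+B+C+D$, and by the triangle inequality $\|F\|_{H_1^*}\leq \|A\|_{H_1^*}+\|B\|_{H_1^*}+\|C\|_{H_1^*}+\|D\|_{H_1^*}$. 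The task reduces to bounding each of the four summands by the matching term on the right-hand side of \eqref{eq:blueridgemountain} and then taking the infimum over admissible decompositions.

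Since $\E_{k,l}A=\sum_{i\leq k,\,j\leq l}\alpha_{i,j}$, the pointwise bound $MA\leq \sum_{i,j}|\alpha_{i,j}|$ yields $\|A\|_{H_1^*}\leq\E\sum_{i,j}|\alpha_{i,j}|$. For $B$, Weisz's inequality $\|\cdot\|_{H_1^s}\gtrsim\|\cdot\|_{H_1^*}$ quoted in the introduction (valid under (F4), which the canonical product filtration satisfies) gives directly
\beq \|B\|_{H_1^*}\lesssim\|B\|_{H_1^s}=\E\sqrt{\sum_{i,j}\E_{i-1,j-1}|\beta_{i,j}|^2}.\eeq

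For $C$ I would reduce to a one-parameter problem in the vertical direction. Cairoli's lemma (a consequence of (F4)) gives $\E_{\infty,j-1}X=\E_{i,j-1}X$ for $X\in L^2(\mathcal{F}_{i,\infty})$, so $\E_{\infty,j-1}\gamma_{i,j}=\E_{i,j-1}\gamma_{i,j}=0$ and, more generally, $\E_{k,l}\gamma_{i,j}=\gamma_{i,j}\mathbbm{1}_{\{i\leq k,\,j\leq l\}}$. This yields the pointwise estimate
\beq MC \leq \sum_i \sup_l\left|\sum_{j\leq l}\gamma_{i,j}\right|.\eeq
For each fixed $i$, $(\gamma_{i,j})_j$ is a one-parameter martingale-difference sequence for the filtration $(\mathcal{F}_{\infty,j})_j$, so the one-parameter inequality $\E M g\lesssim \E s_2 g$ (which follows from one-parameter Davis combined with the trivial $\|g\|_{H_1^S}\leq\|g\|_{H_1^s}$) gives
\beq \E\sup_l\left|\sum_{j\leq l}\gamma_{i,j}\right|\lesssim \E\sqrt{\sum_j\E_{\infty,j-1}|\gamma_{i,j}|^2}.\eeq
Summing over $i$ and using Fubini produces the bound $\|C\|_{H_1^*}\lesssim\E\sum_i\sqrt{\sum_j\E_{\infty,j-1}|\gamma_{i,j}|^2}$; the symmetric argument, interchanging the roles of the two coordinates, handles $\|D\|_{H_1^*}$.

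Since the heavy lifting was done in Theorem~\ref{2parDG}, no genuinely new difficulty appears. The only subtlety is the bookkeeping in the $C$ and $D$ estimates: verifying that $\gamma_{i,j}$ (respectively $\delta_{i,j}$) is a one-parameter martingale difference for the vertical (respectively horizontal) filtration, and that the conditional second moments produced by the one-parameter $H_1^s$ argument match exactly the $\E_{\infty,j-1}$ and $\E_{i-1,\infty}$ conditional square functions appearing in \eqref{eq:blueridgemountain}.
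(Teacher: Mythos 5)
Your proposal matches the paper's proof almost line by line: apply Theorem~\ref{2parDG} to $\Delta_{i,j}F$, split via the triangle inequality, bound $A$ trivially, bound $B$ by Weisz's $H_1^s\hookrightarrow H_1^*$, and treat $C$ (resp.\ $D$) by the pointwise bound $MC\leq\sum_i\sup_l\bigl|\sum_{j\leq l}\gamma_{i,j}\bigr|$ followed by the one-parameter $\E Mg\lesssim\E s_2 g$ applied in the vertical (resp.\ horizontal) filtration. This is exactly the paper's argument; the brief appeal to Cairoli's lemma is superfluous since Theorem~\ref{2parDG} already provides the martingale-difference structure of $\gamma_{i,j}$ directly.
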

\begin{proof}
We apply Theorem \ref{2parDG} to $f_{i,j}=\Delta_{i,j}F$. This results in a decomposition 
\beq \Delta_{i,j}F= \Delta_{i,j}A+ \Delta_{i,j}B+ \Delta_{i,j}C+ \Delta_{i,j}D\eeq
of $\Delta_{i,j}F$ into a sum of martingale difference sequences satisfying
\begin{align} \|F\|_{H_1^S}\gtrsim & \E\sum_{i,j}\left|\Delta_{i,j}A\right|\\
+&  \E\sqrt{\sum_{i,j} \E_{i-1,j-1}\left|\Delta_{i,j} B\right|^2}\\
+& \E\sum_i \sqrt{\sum_j \E_{\infty,j-1}\left|\Delta_{i,j}C\right|^2}\\
+& \E\sum_j \sqrt{\sum_i \E_{i-1,\infty}\left|\Delta_{i,j}D\right|^2}.
\end{align}
By triangle inequality,
\beq \|F\|_{H_1^*}\leq \|A\|_{H_1^*}+\|B\|_{H_1^*}+\|C\|_{H_1^*}+\|D\|_{H_1^*}.\eeq
Trivially, 
\beq \|A\|_{H_1^*}\leq \E\sum_{i,j}\left|\Delta_{i,j}A\right|.\eeq
The inequality 
\beq \|B\|_{H_1^*}\lesssim \E\sqrt{\sum_{i,j} \E_{i-1,j-1}\left|\Delta_{i,j} B\right|^2}\eeq
is an inequality due to Weisz \cite{niestety}. 
By the one parameter result $\|\cdot\|_{H_1^*}\lesssim \|\cdot\|_{H_1^s}$ applied at each $i$ to $\E_{i,\infty}C-\E_{i-1,\infty}C$,
\begin{align}
\E\sup_{i,j} \left|\E_{i,j}C\right|\leq & \E\sup_j \sum_i \left|\E_{\infty,j}\p{\E_{i,\infty}-\E_{i-1,\infty}}C\right|\\
\leq & \E \sum_i \sup_j \left|\E_{\infty,j}\p{\E_{i,\infty}-\E_{i-1,\infty}}C\right|\\
= & \sum_i \left\| \p{\E_{i,\infty}-\E_{i-1,\infty}}C\right\|_{H_1^*\left[\p{\mathcal{F}_{\infty,j}}_j\right]}\\
\lesssim & \sum_i \left\| \p{\E_{i,\infty}-\E_{i-1,\infty}}C\right\|_{H_1^s\left[\p{\mathcal{F}_{\infty,j}}_j\right]}\\
=& \E\sum_i \sqrt{\sum_j \E_{\infty,j-1}\left|\Delta_{i,j}C\right|^2}.
\end{align}
The inequality for $D$ is analogous and we are done. 
\end{proof}
\section{Attempts to attack the other inequality}
Our proof of Lemma \ref{westvirginia} can be factorized into 2 parts: the Lagrange multiplier part and a proof that some functions sequence is bounded away from zero in the norm dual to the right-hand side. The first part can be phrased in a separate lemma which we borrow from \cite{rzjsch}.
\begin{lem}\label{gradlemma} Let $N\in \N$, $\p{\Omega,\mathcal{F},\mu}$ be a finite probability space, $V_k$ be subspaces of $L^2\p{\Omega}$, $V$ be the subspace of $L^2\p{\Omega,\ell^2_N}$ consisting of sequences $\p{f_{n}}_{n=1}^N$ of functions such that $f_n\in V_n$, $P_{V_k}$, $P_V$ be orthogonal projections onto $V_k$, $V$ respectively, $\|\cdot\|_X$ be a random norm on $\R^N$ differentiable outside of $0$, $\|\cdot\|_Y$ be a norm on $V$, $Y^*$ be the dual norm on $V$ in the sense of the usual pairing. Then for any constants $C>0$, $q> 1$, the following are equivalent:\\
(i) for any $\p{f_1,\ldots,f_N}\in V$,
\beq \label{eq:lemgradi}\E \left\|\p{f_n}_{n=1}^N\right\|_X^q\geq C\left\|\p{f_n}_{n=1}^N\right\|_Y^q\eeq
(ii) for any $\p{f_1,\ldots,f_N}\in V$ not identically zero,
\beq \label{eq:lemgradii}\left\| P_V\p{\|f\|_X^{q-1} \nabla\| \cdot \|_X (f)}\right\|_{Y^*}\geq C \|f\|_Y^{q-1},\eeq
where $\nabla\|\cdot\|_X$ is extended to be equal to $0$ in $0$. Moreover, if for any $\p{f_1,\ldots,f_N}\in V$ not identically zero,
\beq \label{eq:weirdcond}\left\| P_{V\cap F}\p{\nabla\|\cdot\|_X(f)}\right\|_{Y^*}\geq C\left\|P_{V\cap F}\right\|_{Y^*\to Y^*},\eeq
where $F=\left\{\varphi\in V: \supp \varphi_i\subset \supp f_i\right\}$, then 
\beq \label{eq:l1xgeqcy}\E\|f\|_X\geq C\|f\|_Y\eeq 
for all $f\in V$. 
\end{lem}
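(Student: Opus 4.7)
The strategy is Lagrange multipliers in the finite-dimensional space $V$, combined with duality and Euler's homogeneity identity. For (i)$\Rightarrow$(ii), Euler's identity applied pointwise to the $1$-homogeneous $\|\cdot\|_X$ gives $\ilsk{f(\omega),\nabla\|\cdot\|_X(f(\omega))}_{\R^N}=\|f(\omega)\|_X$ (both sides vanishing at $f(\omega)=0$ by the convention). Multiplying by $\|f(\omega)\|_X^{q-1}$ and integrating yields
\beq \ilsk{f,\|f\|_X^{q-1}\nabla\|\cdot\|_X(f)}_{L^2}=\E\|f\|_X^q,\eeq
and since $f\in V$ the inner product is unchanged by inserting $P_V$ on the right factor. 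Duality then gives $\left\|P_V\p{\|f\|_X^{q-1}\nabla\|\cdot\|_X(f)}\right\|_{Y^*}\geq \E\|f\|_X^q/\|f\|_Y$, which combined with (i) yields (ii).

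For (ii)$\Rightarrow$(i), by $q$-homogeneity it suffices to prove $m := \inf\{\E\|g\|_X^q : g \in V,\ \|g\|_Y = 1\} \geq C$, the infimum being attained at some $f_0$ by finite-dimensionality. Since $q > 1$, $\Phi(g) := \E\|g\|_X^q$ is $C^1$ on $V$; $f_0$ minimizes the convex function $\Phi - m\|\cdot\|_Y^q$ (nonnegative by the definition of $m$ and zero at $f_0$), so the subgradient condition yields $\lambda \geq 0$ and $y^* \in \partial\|\cdot\|_Y(f_0)$ with $\|y^*\|_{Y^*} = 1$, $\ilsk{y^*,f_0} = 1$, such that
\beq P_V\p{\|f_0\|_X^{q-1}\nabla\|\cdot\|_X(f_0)} = \lambda y^*.\eeq
Pairing with $f_0$ and invoking the Euler identity from the first step gives $\lambda = m$; taking $Y^*$-norms and using $\|y^*\|_{Y^*} = 1$ yields $\left\|P_V\p{\|f_0\|_X^{q-1}\nabla\|\cdot\|_X(f_0)}\right\|_{Y^*} = m$, and (ii) applied at $f_0$ forces $m \geq C$.

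For the moreover part, the same argument is run with $\Phi(g) = \E\|g\|_X$. The obstruction is that $\Phi$ is only convex, not differentiable where $g$ has zeros: the subgradient condition becomes $P_V \tilde h = m y^*$ for some $\tilde h$ with $\tilde h(\omega) \in \partial\|\cdot\|_X(f_0(\omega))$ a.e. The key observation is that $\tilde h - \nabla\|\cdot\|_X(f_0)$ is supported on $\{f_0 = 0\} = \bigcap_i \{f_{0,i} = 0\}$, whereas every $\varphi \in F$ has $\varphi_i$ supported on $\supp f_{0,i}$, so $\tilde h - \nabla\|\cdot\|_X(f_0) \in F^\perp$ and hence $P_F \tilde h = P_F(\nabla\|\cdot\|_X(f_0))$. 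Combined with $V^\perp \subset F^\perp$ (since $F \subset V$), this gives
\beq P_F\p{\nabla\|\cdot\|_X(f_0)} = m P_F y^*,\eeq
whose $Y^*$-norm is at most $m\|P_F\|_{Y^* \to Y^*}$. Comparing with \eqref{eq:weirdcond} at $f_0$ and using $\|P_F\|_{Y^* \to Y^*} \geq 1$ (since $P_F f_0 = f_0 \neq 0$) gives $m \geq C$. The main obstacle is exactly this non-differentiability: the projection $P_F$ in \eqref{eq:weirdcond} is precisely what is needed to annihilate the ambiguity of $\tilde h$ on $\{f_0 = 0\}$, where $\nabla\|\cdot\|_X(f_0)$ is well-defined only by the convention.
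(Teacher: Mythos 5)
The paper does not actually prove Lemma \ref{gradlemma} — it explicitly borrows it from \cite{rzjsch} — so the only available comparison is with the Lagrange-multiplier scheme the paper employs for Lemma \ref{westvirginia}, which this proposal clearly follows. Your overall structure is sound: Euler's homogeneity identity plus duality for (i)$\Rightarrow$(ii); a compactness-plus-first-order-condition argument for (ii)$\Rightarrow$(i); and, for the $q=1$ ``moreover'' clause, the correct and rather delicate observation that the subgradient selection $\tilde h$ differs from $\nabla\|\cdot\|_X(f_0)$ only on $\{f_0=0\}$, which $P_{V\cap F}$ annihilates — this is indeed exactly the role of the projection in \eqref{eq:weirdcond}.

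One step is mis-justified, though the conclusion it leads to is correct. You assert that $\Phi - m\|\cdot\|_Y^q$ is convex; it is not (it is a difference of two convex functions, and already for $\Phi=\|\cdot\|_1^2$, $\|\cdot\|_Y=\|\cdot\|_2$ on $\R^2$ it is indefinite for intermediate $m$). The subgradient condition you invoke therefore does not follow from convexity of that difference. It does follow, for instance, from the directional-derivative inequality at a minimizer of a DC function: if $\phi-\psi$ attains a minimum at $f_0$ with $\phi,\psi$ convex, then $\partial\psi(f_0)\subset\partial\phi(f_0)$. For $q>1$, $\partial\phi(f_0)=\{\nabla_V\Phi(f_0)\}$ is a singleton, so $m\partial\|\cdot\|_Y^q(f_0)$ is forced to be the singleton $\{q m y^*\}$ and the stated Lagrange identity drops out; for $q=1$, it gives precisely the inclusion $m\,\partial\|\cdot\|_Y(f_0)\subset P_V\partial_{L^2}\Phi(f_0)$ that you use. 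Alternatively, one can fix any $y^*\in\partial\|\cdot\|_Y(f_0)$ normalized by $\ilsk{y^*,f_0}=1$ and observe that $f_0$ minimizes the convex $\Phi$ over the affine set $\{g\in V:\ilsk{y^*,g}=1\}\subset\{\|g\|_Y\geq 1\}$, giving the Lagrange condition directly. Either fix makes the argument complete. A small additional remark: in the last line you do not actually need $\|P_{V\cap F}\|_{Y^*\to Y^*}\geq 1$ (which would require its own justification since $P_{V\cap F}$ is an $L^2$-orthogonal, not a $Y^*$-contractive, projection); $\|P_{V\cap F}\|_{Y^*\to Y^*}>0$ suffices, and that is immediate from $P_{V\cap F}f_0=f_0\neq 0$.
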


Suppose we know that any martingale with repsect to an (F4) filtration can be approximated by a martingale with repsect to an (F4) filtration on a finite probability space (in particular, this would be true if the embedding theorem from \cite{RzT} was true without the discreteness assumption). Our inequality can be written as 
\beq \E \p{\sum_{n\leq N} \left|\sum_{k\leq n}d_k\right|^p}^\frac{1}{p}\geq C_{p,N} \left\|\sum_k d_k\right\|_{\p{H_1^S}^*}\eeq
where $p\to \infty$, $N\in \mathbb{N}^2$, and $d_k$ is a $k$-th martingale difference. We may apply the above lemma with $q=1$, $V_k$ being the space of $k$-th martingale difference, $X=\ell_p$, $Y=H_1^S$. One easily checks that 

By calculating th gradient we arrive at
\beq \left\| \sum_k \Delta_k \frac{ \sum_{n\geq k}\sgn f_n \left|f_n\right|^{p-1} }{ \p{\sum_j \left|f_j\right|^p}^\frac{p-1}{p} }\right\|_{ \p{H_1^S}^* }\geq C_{p,N}.\eeq

\end{document}